\numberwithin{equation}{section}
\theoremstyle{plain}
\newtheorem{theorem}{Theorem}[section]
\newtheorem{proposition}[theorem]{Proposition}
\newtheorem{lemma}[theorem]{Lemma}
\theoremstyle{definition}
\newtheorem{example}[theorem]{Example}
\newtheorem{remark}[theorem]{Remark}
\newtheorem{definition}[theorem]{Definition}
\newcommand{\real}{\mathbb{R}}
\newcommand{\comp}{\mathbb{C}}
\begin{document}
\title{On free infinite divisibility for classical Meixner distributions}

\author{Marek Bo\.zejko\footnote{email: marek.bozejko@math.uni.wroc.pl } \\ Institute of Mathematics,
University of Wroc{\l}aw, \\ Pl.\ Grunwaldzki 2/4, 50-384 Wroc{\l}aw, Poland \\ 
and \\ Takahiro Hasebe\footnote{Current address is: Laboratoire de Math\'ematiques, Universit\'e de Franche-Comt\'e, 25030 Besan\c{c}on cedex, France. Email: thasebe@univ-fcomte.fr} \\ 
Graduate School of Science,  Kyoto University, \\  Kyoto 606-8502, Japan}   
\date{}
\maketitle
\begin{abstract}
We prove that symmetric Meixner distributions, whose probability densities are proportional to $|\Gamma(t+ix)|^2$, are freely infinitely divisible  for $0<t\leq\frac{1}{2}$. 
The case $t=\frac{1}{2}$ corresponds to the law of L\'evy's stochastic area whose probability density is $\frac{1}{\cosh(\pi x)}$. 
A logistic distribution, whose probability density is proportional to $\frac{1}{\cosh^2(\pi x)}$, is freely infinitely divisible too.   
\end{abstract}

Mathematics Subject Classification 2010: 46L54, 30C45

Keywords:  Meixner distribution, L\'evy's stochastic area, logistic distribution, free infinite divisibility

\section{Introduction}\label{sec1}
The free convolution $\mu \boxplus \nu$ of probability measures $\mu$ and $\nu$ on $\mathbb{R}$ is the distribution of $X+Y$, where 
$X$ and $Y$ are free self-adjoint random variables  respectively following the distributions $\mu$ and $\nu$.  A probability measure $\nu$ on $\mathbb{R}$ is said to be \emph{freely infinitely divisible} if, for any $n \in \{1,2,3,\cdots \}$, there exists $\nu_n$ such that 
$$
\nu = \underbrace{\nu_n \boxplus \cdots \boxplus \nu_n.}_{\text{$n$ times}}
$$
This concept was introduced in \cite{V86} and its basic characterization was established in \cite{BV93}. The most important freely infinitely divisible distributions are Wigner's semicircle law and the free Poisson law. 

Recent work has increased examples of probability measures which are infinitely divisible in both senses, classical and free: the Gaussian distribution \cite{BBLS11}, 
chi-square distribution $\frac{1}{\sqrt{\pi x}}e^{-x}1_{[0,\infty)}(x)\,dx$ \cite{AHS}, positive Boolean stable law with stability index $\alpha \in (0,\frac{1}{2}]$ \cite{AHb} and Student distribution $\frac{1}{B(\frac{1}{2}, n-\frac{1}{2})} \frac{1}{(1+x^2)^n}\,1_\real(x)\,dx$ for $n=1,2,3,\cdots$ \cite{H}. It is not yet clear whether
a general theory of the intersection of free and classical infinite divisibility exists. We will add two more examples, Meixner distributions and the logistic distribution, which may contribute to a solution.  

We will prove that symmetric  \emph{Meixner distributions} 
$$
\rho_t(dx):= \frac{4^t}{2\pi \Gamma(2t)}|\Gamma(t+ix)|^2\,dx,~~x \in \real
$$
are freely infinitely divisible for $0 < t \leq \frac{1}{2}$, where $\Gamma(z)$ is the gamma function defined by: 
$$
\Gamma(z)= \int_{0}^\infty t^{z-1}e^{-t}\,dt,~~~z>0. 
$$
The gamma function satisfies the functional relation $\Gamma(z+1)=z\Gamma(z)$, which extends $\Gamma$ to a meromorphic function in $\comp$ with poles at $z=0,-1,-2,-3,\ldots$ \cite[Chapter 6]{AS70}.  
 The measures $\rho_t$ are probability distributions of a L\'evy process, called a Meixner process \cite{ST98}, since the characteristic function of $\rho_t$ is given by 
\begin{equation}\label{eq90}
\widehat{\rho}_t(z)= \left(\frac{1}{\cosh(\frac{z}{2})}\right)^{2t}.    
\end{equation}
Hence $\rho_t$ is classically infinitely divisible for any $t>0$. The measure $\rho_t$ orthogonalizes Meixner-Pollaczek polynomials $\{P^{(t)}_n(x)\}_{n=0}^\infty$ which satisfy the recurrence relation \cite{KLS10} 
$$
xP_n^{(t)}(x) = P_{n+1}^{(t)}(x) + \frac{n(n+2t-1)}{4}P_{n-1}^{(t)}(x),~~n\geq1,
$$
with initial conditions $P_0^{(t)}(x)=1,~P_1^{(t)}(x)=x.$

If $t=\frac{1}{2}$, the measure $\rho_{1/2}$ coincides with 
$$
\mu_1(dx)=\frac{1}{\cosh (\pi x)}\,dx,~~x\in \real, 
$$ 
which is  the law of \emph{L\'evy's stochastic area}\footnote{This measure is also called the \emph{hyperbolic secant distribution}. }  
$$
\frac{1}{2}\int_0^1(B^1_t \,dB^2_t - B^2_t \,dB^1_t), 
$$
where $(B^1_t, B^2_t)$ is a standard two-dimensional Brownian motion~\cite{L54}. The moments $m_{n}$ of the rescaled measure 
$\displaystyle \frac{1}{2\cosh(\pi x /2)}\,dx$ are \emph{Euler numbers} (with positive signs): 
\[
(m_0,m_2,m_4,m_6,m_8,\cdots)=(1,1,5,61,1385,50521,\cdots),~~m_{2n+1}=0, ~n \geq 0.   
\] 
See \cite[Chapter 23]{AS70} for Euler numbers. 

The \emph{logistic distribution}  
$$
\mu_2(dx)=\frac{\pi}{2\cosh^2(\pi x)}\,dx,~~x\in \real, 
$$ 
is know to be classically infinitely divisible \cite{B92}, and we are going to prove that it is freely infinitely divisible too. 
This measure orthogonalizes  \emph{continuous Hahn polynomials} $\{P_n(x)\}_{n =0}^\infty$ which satisfy the recurrence relation \cite{KLS10}
$$
xP_n(x) = P_{n+1}(x) + \frac{n^4}{4(4n^2-1)}P_{n-1}(x),~~n\geq1,
$$
with initial conditions $P_0(x)=1,~P_1(x)=x.$

The moments $m'_{n}$ of the rescaled measure $\frac{\pi}{4\cosh^2 (\pi x /2)}\,dx$ are  
$$
(m'_0,m'_2,m'_4,m'_6,m'_8,\cdots)=\left(1,\frac{1}{3}, \frac{7}{15},\frac{31}{21}, \frac{127}{15},\cdots\right),~~m'_{2n+1}=0, ~n \geq 0, 
$$
which can be written as $m_{n}'= |(2-2^{n})B_{n}|$ in terms of \emph{Bernoulli numbers} $B_n$ \cite{AS70}.

\section{Preliminaries}
Let $\comp^+$ and $\comp^-$ be the upper half-plane and the lower half-plane respectively. 
Basic tools for proving free infinite divisibility of a probability measure $\mu$ are the Cauchy transform 
\[
G_\mu(z):=\int_{\mathbb{R}}\frac{1}{z-x}\, \mu(dx),~~z\in\mathbb{C}^+
\]
and its reciprocal $F_\mu(z):=\frac{1}{G_\mu(z)}$. Let $\Gamma_{\alpha,M}$ be a truncated cone 
$$
\Gamma_{\alpha,M}:= \{z \in \mathbb{C}^+: \text{\normalfont Im}\,z >M,~ |\text{\normalfont Re}\,z| <\alpha \text{\normalfont Im}\,z\},~~\alpha,M>0. 
$$
The reciprocal Cauchy transform maps $\comp^+$ to $\comp^+$ analytically, and it satisfies $\text{Im}\,F_\mu(z) \geq  \text{Im}\,z$ for $z \in \comp^+.$
For any $0 < \varepsilon < \alpha$ and $\mu$, there exist $M>0$ and  a unique univalent inverse map $F_\mu^{-1}$ from $\Gamma_{\alpha-\varepsilon, (1+\varepsilon)M}$ into 
$\comp^+$ such that $F_\mu(\Gamma_{\alpha, M}) \supset \Gamma_{\alpha-\varepsilon, (1+\varepsilon)M}$ and $F_\mu \circ F_\mu^{-1}=\text{\normalfont Id}$ in 
$\Gamma_{\alpha-\varepsilon, (1+\varepsilon)M}$ \cite{BV93}. 

Free convolution and free infinite divisibility can be characterized by the \emph{Voiculescu transform} of $\mu$ defined by 
\begin{equation}\label{eq44}
\phi_\mu(z) := F_\mu^{-1}(z)-z
\end{equation}
in a domain of the form $\Gamma_{\beta,L}$.  

\begin{theorem}[\cite{BV93}]
\begin{enumerate}[\rm(1)]
\item The free convolution $\mu \boxplus \nu$ is a unique probability measure such that 
\[
\phi_{\mu \boxplus \nu}(z) = \phi_\mu (z) + \phi_\nu(z)
\]
in a common domain of the form $\Gamma_{\beta,L}$. 

\item A probability measure $\mu$ on $\real$ is freely infinitely divisible if and only if $-\phi_\mu$ analytically extends to a Pick function, i.e.\ an analytic function which maps $\comp^+$ into $\comp^+ \cup \real$. 
\end{enumerate}
\end{theorem}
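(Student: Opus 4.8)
The plan is to prove the two parts separately, treating (1) as the linearization property of the Voiculescu transform and (2) through the free L\'evy--Khintchine representation, which recasts the Pick condition as a genuine $\boxplus$-root structure.

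For part (1) the function $\phi_\mu$ is the analytic incarnation of the free cumulant generating function ($R$-transform), so additivity amounts to the statement that free convolution adds free cumulants. The cleanest self-contained route uses subordination: realize $\mu,\nu$ as the distributions of free self-adjoint elements $X,Y$ in a tracial $W^*$-probability space, so $\mu\boxplus\nu$ is the distribution of $X+Y$. One then establishes analytic subordination functions $\omega_1,\omega_2:\comp^+\to\comp^+$ with $F_{\mu\boxplus\nu}(z)=F_\mu(\omega_1(z))=F_\nu(\omega_2(z))$ and $\omega_1(z)+\omega_2(z)=z+F_{\mu\boxplus\nu}(z)$. Evaluating at $z=F_{\mu\boxplus\nu}^{-1}(w)$ and unwinding $\phi_\mu=F_\mu^{-1}-\mathrm{Id}$ gives $\phi_{\mu\boxplus\nu}=\phi_\mu+\phi_\nu$ on a common cone $\Gamma_{\beta,L}$. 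Uniqueness follows because a probability measure is determined by its Cauchy transform on any such cone, via analytic continuation and Stieltjes inversion.

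For the sufficiency direction of part (2), suppose $-\phi_\mu$ extends to a Pick function $f$. By the Nevanlinna representation there are $a\in\real$, $b\geq0$ and a finite positive measure $\sigma$ with $f(z)=a+bz+\int_\real \frac{1+xz}{x-z}\,\sigma(dx)$; the growth estimate $\phi_\mu(z)=o(z)$ along $\Gamma_{\beta,L}$ forces $b=0$, so $\phi_\mu$ has the free L\'evy--Khintchine form. For each $n$, the function $\tfrac1n\phi_\mu$ has the same form with $(a,\sigma)$ scaled by $1/n$. The key lemma I would prove is a realization statement: every function of this Nevanlinna type is the Voiculescu transform of some probability measure. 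Concretely, setting $H(z):=z+\tfrac1n\phi_\mu(z)$ one checks that $H$ admits an analytic right inverse $F$ mapping $\comp^+$ into $\comp^+$ with $F(iy)/(iy)\to1$ as $y\to\infty$, whence $F=F_{\mu_n}$ for a unique probability measure $\mu_n$. Part (1) then yields $\phi_{\mu_n^{\boxplus n}}=n\,\phi_{\mu_n}=\phi_\mu$, so $\mu_n^{\boxplus n}=\mu$ and $\mu$ is freely infinitely divisible. For necessity, if $\mu=\mu_n^{\boxplus n}$ then $\phi_{\mu_n}=\tfrac1n\phi_\mu$ on a cone, and $\{\mu_n\}$ is an infinitesimal array; invoking the Bercovici--Voiculescu limit theorem for $\nu_n^{\boxplus k_n}$ shows the limit has a Voiculescu transform of free L\'evy--Khintchine form, which extends to all of $\comp^+$ and is anti-Pick, forcing $-\phi_\mu$ to be Pick.

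The main obstacle is the realization lemma in the sufficiency direction --- proving that an abstractly prescribed Nevanlinna-type function actually arises as a Voiculescu transform, which requires the characterization of reciprocal Cauchy transforms as self-maps $F:\comp^+\to\comp^+$ with $F(iy)/(iy)\to1$ and a careful inversion on cones. A comparable difficulty is the compactness and continuity package (weak convergence of measures versus locally uniform convergence of $\phi$ on cones) underlying the limit argument for necessity. The subordination existence in part (1) is also nontrivial, though by now standard.
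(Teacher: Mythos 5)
A preliminary remark: this theorem is stated in the paper purely as a quotation from \cite{BV93}; the paper itself contains no proof of it. So your proposal can only be measured against the argument in that reference and the standard literature, not against anything internal to this paper.

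With that benchmark, your part (1) and the sufficiency half of part (2) are viable sketches, though (1) takes a genuinely different route from \cite{BV93}: there, $\mu\boxplus\nu$ for measures with unbounded support is defined via self-adjoint operators affiliated with a finite von Neumann algebra, and additivity of $\phi$ is obtained by truncation to compactly supported measures and a limiting argument, whereas you invoke analytic subordination ($F_{\mu\boxplus\nu}=F_\mu\circ\omega_1$ together with $\omega_1+\omega_2=\mathrm{id}+F_{\mu\boxplus\nu}$). Subordination in this generality is a later theorem (Biane; Belinschi--Bercovici), but its proofs do not rely on the present statement, so there is no circularity; what it buys is a purely complex-analytic derivation of additivity, at the cost of a substantial black box. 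Your realization lemma in the sufficiency direction of (2) is exactly the crux of the proof in \cite{BV93}, and it can be filled by a fixed-point argument: for fixed $z\in\comp^+$ the map $w\mapsto z-\tfrac{1}{n}\phi_\mu(w)$ sends $\comp^+$ into $\{w:\text{\normalfont Im}\,w\geq\text{\normalfont Im}\,z\}$, hence has a unique attracting fixed point $F(z)$, which depends analytically on $z$ and satisfies $F=F_{\mu_n}$ by the characterization of reciprocal Cauchy transforms.

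The genuine gap is the necessity half of (2). You propose to regard the free $n$-th roots $\{\mu_n\}$ as an infinitesimal array and to invoke ``the Bercovici--Voiculescu limit theorem for $\nu_n^{\boxplus k_n}$'' to conclude that $\phi_\mu$ has free L\'evy--Khintchine form. That is circular: in the literature, the limit theorems for free convolutions along infinitesimal arrays (in \cite{BV93} and in Bercovici--Pata) are themselves proved by first establishing the present characterization --- one shows that $k_n\phi_{\nu_n}$ converges to a function of L\'evy--Khintchine form and then uses the characterization/realization theorem to identify the limit as freely infinitely divisible. Unless you supply an independent proof of that limit theorem (which would in effect contain the direct argument below), this direction remains unproved. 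The direct argument is shorter and avoids arrays entirely: from $\phi_\mu=n\phi_{\mu_n}$ on a common truncated cone, compose with $F_{\mu_n}$ to obtain
\[
\phi_\mu\bigl(F_{\mu_n}(z)\bigr)=n\bigl(z-F_{\mu_n}(z)\bigr)
\]
for $z$ in a suitable cone. The right-hand side is defined on all of $\comp^+$ and maps it into $\comp^-\cup\real$, because $\text{\normalfont Im}\,F_{\mu_n}(z)\geq\text{\normalfont Im}\,z$. One checks that $F_{\mu_n}\to\mathrm{id}$ locally uniformly (from $\phi_{\mu_n}=\phi_\mu/n\to0$ on a cone), so the left-hand side converges to $\phi_\mu$ there; since the functions $n(z-F_{\mu_n}(z))$ all omit the open upper half-plane, they form a normal family, and any subsequential locally uniform limit is an analytic extension of $\phi_\mu$ to $\comp^+$ with values in $\comp^-\cup\real$. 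That is precisely the Pick property of $-\phi_\mu$, with no appeal to triangular-array limit theorems.
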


In terms of analytic properties of $F_\mu^{-1}$, a useful subclass of freely infinitely divisible distributions is introduced.  
\begin{definition}
A probability measure $\mu$ is said to be in the class $\mathcal{UI}$ if $F_\mu^{-1}$ defined in a domain of the form $\Gamma_{\beta,L}$ analytically extends to a univalent map in $\comp^+.$ 
Equivalently, $\mu \in \mathcal{UI}$ if and only if  there exists a simply connected open set $\comp^+ \subset \Omega \subset \comp$ such that 
\begin{enumerate}[\rm(i)]
\item $F_\mu$ analytically extends to a univalent map in $\Omega$, 
\item $F_\mu(\Omega) \supset \comp^+$. 
\end{enumerate}
This equivalence is proved just by applying Riemann mapping theorem. 
\end{definition}
\begin{remark}\label{rem56}
In \cite{AHa} we required $F_\mu$ to be univalent in $\comp^+$ in the definition of $\mu \in \mathcal{UI}$, but this automatically follows. If $F^{-1}_\mu$ is analytic in $\comp^+$, then $F^{-1}_\mu \circ F_\mu (z) =z$ for $z \in \comp^+$ by Identity Theorem, so that $F_\mu$ is univalent in $\comp^+$.    
\end{remark}

\begin{lemma}[\cite{AHa}]\label{lem1}
\begin{enumerate}[\rm(1)]
\item If $\mu \in \mathcal{UI}$, then $\mu$ is freely infinitely divisible. 
\item The class $\mathcal{UI}$ is closed with respect to the weak convergence. 
\item The class $\mathcal{UI}$ is not closed under free convolution, i.e.\ $\mu,\nu \in \mathcal{UI}$ does not imply $\mu \boxplus \nu \in \mathcal{UI}.$ 
\end{enumerate}
\end{lemma}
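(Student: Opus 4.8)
We use throughout the Bercovici--Voiculescu characterization recorded above: a probability measure $\mu$ is freely infinitely divisible exactly when $-\phi_\mu$ extends from a truncated cone to a Pick function on $\comp^+$.

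For (1), the plan is to prove directly that, under the hypothesis $\mu\in\mathcal{UI}$, one has $\operatorname{Im}\phi_\mu(z)\le 0$ for every $z\in\comp^+$, so that $-\phi_\mu$ is Pick and the criterion applies. I would work with the equivalent formulation of the class: fix a simply connected $\Omega$ with $\comp^+\subset\Omega$ on which $F_\mu$ is univalent and $F_\mu(\Omega)\supset\comp^+$, noting that on $\comp^+$ this extension is by definition the reciprocal Cauchy transform. Given $z\in\comp^+\subset F_\mu(\Omega)$, injectivity of $F_\mu$ on $\Omega$ yields a unique $w=F_\mu^{-1}(z)\in\Omega$ with $F_\mu(w)=z$. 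The heart of the argument is then a one-line case distinction on the sign of $\operatorname{Im} w$: if $\operatorname{Im} w\le 0$ then trivially $\operatorname{Im} w\le 0<\operatorname{Im} z$; if $\operatorname{Im} w>0$ then $w\in\comp^+$, so the universal inequality $\operatorname{Im} F_\mu(w)\ge\operatorname{Im} w$ for reciprocal Cauchy transforms gives $\operatorname{Im} z\ge\operatorname{Im} w$. Either way $\operatorname{Im} F_\mu^{-1}(z)=\operatorname{Im} w\le\operatorname{Im} z$, that is $\operatorname{Im}\phi_\mu(z)\le 0$, as required.

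For (2), let $\mu_n\to\mu$ weakly with $\mu_n\in\mathcal{UI}$. Weak convergence yields $G_{\mu_n}\to G_\mu$, hence $F_{\mu_n}\to F_\mu$, locally uniformly on $\comp^+$, and by the Bercovici--Voiculescu correspondence $\phi_{\mu_n}\to\phi_\mu$ uniformly on a common truncated cone. By part (1) every $-\phi_{\mu_n}$ is a Pick function; since they converge on an open subset of $\comp^+$, a normal-families argument (Vitali's theorem, using that a locally bounded sequence of Pick functions is normal with Pick limits) upgrades this to locally uniform convergence of $-\phi_{\mu_n}$ on all of $\comp^+$ to a Pick function $\Psi$ coinciding with $-\phi_\mu$ on the cone. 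Hence $F_\mu^{-1}=z+\phi_\mu$ extends analytically to $z-\Psi$ on $\comp^+$; and since each $F_{\mu_n}^{-1}=z+\phi_{\mu_n}$ is univalent and converges to this limit locally uniformly, Hurwitz's theorem (the limit is nonconstant, being asymptotic to $z$) forces the limit to be univalent. Thus $F_\mu^{-1}$ extends univalently, i.e.\ $\mu\in\mathcal{UI}$.

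For (3) one cannot argue abstractly, since this is a non-closure statement: the plan is to exhibit an explicit pair $\mu,\nu\in\mathcal{UI}$ with $\mu\boxplus\nu\notin\mathcal{UI}$. As Voiculescu transforms add, $F_{\mu\boxplus\nu}^{-1}(z)=z+\phi_\mu(z)+\phi_\nu(z)$, so the task reduces to choosing measures whose individual inverse reciprocal transforms are univalent on $\comp^+$ while their sum is not; one then certifies failure of injectivity by producing two distinct points of $\comp^+$ with equal image, or equivalently an interior critical point of the extension. I would search among finitely supported or simple compactly supported laws, where $\phi$ is rational or algebraic and everything can be checked in closed form. I expect the genuine difficulty to lie in (2) and (3) rather than in (1): in (2) the delicate step is promoting convergence from a cone to all of $\comp^+$ while preserving univalence, exactly where normality of Pick functions and Hurwitz's theorem must be combined; in (3) the real work is locating a correct explicit counterexample, after which verification is a finite computation. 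Part (1), by contrast, collapses to the short imaginary-part estimate above once the class is written in its $\Omega$-form.
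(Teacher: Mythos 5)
First, note that the paper itself does not prove this lemma --- it imports it from \cite{AHa} --- so your attempt can only be compared with the standard arguments in that reference, which your parts (1) and (2) essentially reproduce. Part (1) is correct: the case distinction on $\text{Im}\,w$ for $w=F_\mu^{-1}(z)\in\Omega$, combined with $\text{Im}\,F_\mu(w)\geq \text{Im}\,w$ when $w\in\comp^+$, is exactly the classical argument (going back to \cite{BBLS11}); the only step you leave implicit is that the inverse of the extension agrees on a truncated cone with the Bercovici--Voiculescu inverse, which follows at once from injectivity of $F_\mu$ on $\Omega$ since the latter inverse takes values in $\comp^+\subset\Omega$. Part (2) is correct in outline, though the parenthetical ``a locally bounded sequence of Pick functions is normal'' is imprecise: the family of Pick functions is not locally bounded a priori. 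What saves the argument is either spherical normality (Pick functions omit $\comp^-$, so Montel's theorem applies with possible limit $\equiv\infty$, excluded here because the limit is finite on the cone), or local boundedness deduced from convergence at one point via the hyperbolic contraction property of self-maps of $\comp^+$; after that, Vitali plus Hurwitz (the limit is nonconstant since $F_\mu^{-1}(z)=z+o(z)$ on the cone) work as you say. These are repairable imprecisions, not gaps.

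Part (3), however, is a genuine gap. A non-closure statement is proved only by an explicit counterexample, and you provide none --- what you give is a search plan. Moreover, the proposed search space is provably empty of candidates: by your own part (1), every member of $\mathcal{UI}$ is freely infinitely divisible, and a non-Dirac purely atomic measure (in particular any non-degenerate finitely supported law) is never freely infinitely divisible --- if it were, it would equal $\nu\boxplus\nu$ with $\nu$ its free convolution square root, and the Bercovici--Voiculescu rule for atoms of free convolutions, $(\nu\boxplus\nu)(\{b+c\})=\nu(\{b\})+\nu(\{c\})-1>0$, forces the total atomic mass to be $1$ only when $\nu$ is a point mass. So among finitely supported laws only Dirac measures lie in $\mathcal{UI}$, and free convolutions of Diracs are Diracs, again in $\mathcal{UI}$. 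A genuine counterexample must therefore come from measures with nontrivial absolutely continuous parts; the relevant phenomenon is the one the paper points out after its list of examples, namely the existence of measures (certain Boolean stable laws $\mathbf{b}_\alpha^\rho$, $\frac{1}{2}\leq\alpha\leq\frac{2}{3}$) that are freely infinitely divisible but not in $\mathcal{UI}$, and the actual work --- done in \cite{AHa}, not reducible to a ``finite computation'' with rational transforms --- is to realize such a non-$\mathcal{UI}$ measure as a free convolution of two members of $\mathcal{UI}$.
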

This class was essentially introduced in \cite{BBLS11} to show that the normal law is freely infinitely divisible, and this class has been successfully applied to several probability measures \cite{ABBL10, AB, AHa, AHb, H}. Examples are presented below, mostly taken from the aforementioned references.
\begin{example}The following probability measures belong to $\mathcal{UI}$. 
\begin{enumerate}[\rm(1)]
\item Wigner's semicircle law 
$$
\displaystyle \mathbf{w}(dx)= \frac{1}{2\pi}\sqrt{4-x^2}\,1_{[-2,2]}(x)\,dx,~~ F^{-1}_{\mathbf{w}}(z)=z+\frac{1}{z}.
$$ 
\item The free Poisson law (or Marchenko-Pastur law) 
$$
\displaystyle \mathbf{m}(dx)= \frac{1}{2\pi}\sqrt{\frac{4-x}{x}}\,1_{(0,4]}(x)\,dx, ~~F^{-1}_{\mathbf{m}}(z)=z+\frac{z}{z-1}.
$$ 
\item The Cauchy distribution 
$$
\displaystyle \mathbf{c}(dx)= \frac{1}{\pi(1+x^2)}\,1_{\real}(x)\,dx,  ~~F^{-1}_{\mathbf{c}}(z)=z-i.
$$
\item \cite{AHa} The beta distribution 
$$
\displaystyle \bm{\beta}_a(dx)= \frac{\sin (\pi a)}{\pi a} \left(\frac{1-x}{x}\right)^a\,1_{(0,1)}(x)\,dx,~~F_{\bm{\beta}_a}^{-1}(z) = \frac{1}{1-(1-\frac{a}{z})^\frac{1}{a}}
$$
for $\frac{1}{2} \leq |a| <1$. $\bm{\beta}_{\frac{1}{2}}$ is equal to $\mathbf{m}$ up to scaling. 
 
\item \cite{BBLS11} The Gaussian distribution
$$
\displaystyle \mathbf{g}(dx)=\frac{1}{\sqrt{2\pi}}e^{-\frac{x^2}{2}}1_\real(x)\,dx.
$$  

\item \cite{ABBL10} The $q$-Gaussian distribution 
$$
\displaystyle \mathbf{g}_q(dx)= \frac{\sqrt{1-q}}{\pi} \sin \theta(x) \prod_{n=1}^\infty (1-q^n)|1-q^n e^{2i\theta(x)}|^2 \,1_{\left[-\frac{2}{\sqrt{1-q}},\frac{2}{\sqrt{1-q}}\right]}(x)\,dx 
$$
for $q \in [0,1)$, where $\theta(x)$ is the solution of $x = \frac{2}{\sqrt{1-q}}\cos \theta$, $\theta \in [0,\pi].$ When $q \to 1$, $\mathbf{g}_q$ converges weakly to $\mathbf{g}$, and  
$\mathbf{g}_0$ coincides with $\mathbf{w}.$
For $q\in (0,1)$, the density function of $\mathbf{g}_q$ can be written as \cite{LM95} 
$$
 \frac{1}{2\pi} q^{-\frac{1}{8}} (1-q)^{\frac{1}{2}} \Theta _{1} \left(\frac{\theta(x)}{\pi}, \frac{1}{2\pi i} \log q\right), 
$$
where $\displaystyle \Theta_1(z,\tau):= 2\sum_{n=0}^\infty (-1)^n (e^{i\pi \tau})^{(n+\frac{1}{2})^2}\sin(2n+1)\pi z$ is a Jacobi theta function.  

\item \cite{AB} The ultraspherical distribution 
$$
\mathbf{u}_n(dx)= \frac{1}{16^n B(n+\frac{1}{2},n+\frac{1}{2})} (4-x^2)^{n-\frac{1}{2}}1_{[-2,2]}(x)\,dx,~~~n=1,2,3,4,\cdots,
$$
where $B(p,q)$ is the beta function. The semicircle law $\mathbf{w}$ appears in the case $n=1$ and the normal law $\mathbf{g}$ in the limit $n \to \infty$ if $\mathbf{u}_n$ are suitably scaled. 

\item \cite{H} The Student distribution 
$$
\displaystyle \mathbf{t}_n(dx) = \frac{1}{B(\frac{1}{2}, n-\frac{1}{2})} \frac{1}{(1+x^2)^n}\,1_\real(x)\,dx, ~~n=1,2,3,\cdots. 
$$ 
$\mathbf{t}_1$ coincides with $\mathbf{c}$, and if suitably scaled, $\mathbf{t}_n$ weakly converge to $\mathbf{g}$ as $n \to \infty$. 
\item \cite{AHb} The Boolean stable law 
$$
\displaystyle \frac{d\mathbf{b}_\alpha^\rho}{dx} =
\begin{cases}
 \dfrac{\sin(\pi\rho \alpha)}{\pi} \dfrac{x^{\alpha-1}}{x^{2\alpha}+2x^\alpha \cos(\pi \rho \alpha)+1},&x>0,\\[15pt]
 \dfrac{\sin(\pi(1-\rho)\alpha)}{\pi} \dfrac{|x|^{\alpha-1}}{|x|^{2\alpha}+2|x|^\alpha \cos(\pi (1-\rho) \alpha)+1},&x<0,\\ 
\end{cases}
$$
for $0<\alpha \leq \frac{1}{2}$, $\rho \in [0,1].$
\end{enumerate}
If $\frac{1}{2} \leq \alpha \leq \frac{2}{3}$ and $2-\frac{1}{\alpha}\leq\rho \leq \frac{1}{\alpha}-1$,  the Boolean stable law $\mathbf{b}_\alpha^\rho$ (defined as above too) is still freely infinitely divisible, but not in the class $\mathcal{UI}$ \cite{AHb}. However, most of the known freely infinitely divisible distributions belong to $\mathcal{UI}$ as presented above.
\end{example}
In order to prove $\mu \in \mathcal{UI}$, the following sufficient condition is useful. 
\begin{proposition} A probability measure $\mu$ on $\real$ is in $\mathcal{UI}$ if there exists a simple, continuous curve $\gamma =(\gamma(t))_{t\in\real} \subset \overline{\comp^{-}}$ with the following properties: 
\begin{enumerate}[\rm(A)]
\item\label{d1} $\displaystyle \lim_{t \to \infty}|\gamma(t)|=\lim_{t \to -\infty}|\gamma(t)|=\infty$; 
\item\label{d3} $F_\mu(\gamma) \subset \overline{\comp^-}$; 
\item\label{d4} $F_\mu$ extends to an analytic function in $D(\gamma)$ and to a continuous function on $\overline{D(\gamma)}$, where $D(\gamma)$ denotes the simply connected open set containing $\comp^+$ with boundary $\gamma$;
\item\label{d5} $F_\mu(z) = z+o(z)$ uniformly as $z \to \infty,~ z \in D(\gamma).$ 
\end{enumerate}
\end{proposition}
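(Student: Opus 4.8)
The plan is to verify the original definition of the class $\mathcal{UI}$ directly, namely to show that $F_\mu^{-1}$, which is a priori defined only on a truncated cone $\Gamma_{\beta,L}\subset\comp^+$, extends analytically and univalently to the whole of $\comp^+$. Everything reduces to one preimage count. Fix an arbitrary $w_0\in\comp^+$. I would show that $F_\mu(z)=w_0$ has exactly one solution, counted with multiplicity, in $D(\gamma)$, and call it $G(w_0)$; the resulting map $G\colon\comp^+\to D(\gamma)$ will be the sought extension. Note first that any such solution is bounded: by (D), $|F_\mu(z)|\ge|z|-|o(z)|\to\infty$ as $z\to\infty$ in $D(\gamma)$, so $F_\mu(z)=w_0$ is impossible for $|z|$ large.

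To count solutions I would truncate the domain. For large $R$, let $\gamma_R$ be the part of $\gamma$ inside $\{|z|<R\}$ and let $C_R$ be the arc of $\{|z|=R\}$ lying in $D(\gamma)$; together they bound $D(\gamma)\cap\{|z|<R\}$, whose positively oriented boundary I call $\Sigma_R$. By (C) the function $F_\mu$ is analytic inside $\Sigma_R$ and continuous up to it (to avoid any boundary subtlety one may integrate along a curve slightly inside $D(\gamma)$ that approximates $\gamma_R$), and by (B) together with the choice $w_0\in\comp^+$ one has $F_\mu(z)-w_0\ne0$ on $\gamma_R$, while $F_\mu(z)-w_0\approx z\ne0$ on $C_R$ for large $R$. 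Hence the number of solutions equals the winding number of $F_\mu(\Sigma_R)$ about $w_0$, which for $R$ large already counts all solutions in $D(\gamma)$ by the boundedness noted above. I would evaluate this winding number by counting signed intersections of $F_\mu(\Sigma_R)$ with the upward vertical ray $\{w_0+is:s>0\}\subset\comp^+$. Along $\gamma_R$, condition (B) gives $\mathrm{Im}\,F_\mu(z)\le0<\mathrm{Im}\,w_0$, so $F_\mu(\gamma_R)\subset\overline{\comp^-}$ never meets this ray; along $C_R$, condition (D) gives $F_\mu(z)=z\,(1+o(1))$, so $F_\mu(C_R)$ runs at modulus $\approx R$ and, passing over the top, crosses the ray exactly once and in the positive sense. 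Therefore the winding number, hence the number of preimages, is exactly $1$.

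The remainder is formal. Uniqueness with multiplicity forces $F_\mu'(G(w_0))\ne0$, so $F_\mu$ is locally biholomorphic at $G(w_0)$ and its local inverse must coincide with $G$ by uniqueness of the global preimage; thus $G$ is analytic on $\comp^+$. It is injective, since $G(w_1)=G(w_2)$ yields $w_1=F_\mu(G(w_1))=F_\mu(G(w_2))=w_2$. Finally $G$ agrees with $F_\mu^{-1}$ on the cone $\Gamma_{\beta,L}$: for $w$ there, $F_\mu^{-1}(w)$ lies in $\comp^+\subset D(\gamma)$ and is a preimage of $w$, so by uniqueness it equals $G(w)$. Hence $F_\mu^{-1}$ extends to the univalent analytic map $G$ on $\comp^+$, which is precisely the statement $\mu\in\mathcal{UI}$.

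I expect the winding-number step to be the main obstacle, and within it the crossing count along $C_R$. One must control $F_\mu$ uniformly up to modulus $R$ on an arc whose endpoints $\gamma(\pm T_R)$ may approach infinity along various directions in $\overline{\comp^-}$, so that $C_R$ need not be a semicircle; the key point that makes the count robust is that the forbidden set $F_\mu(\gamma_R)$ is pinned into $\overline{\comp^-}$ while the upward ray sits in $\comp^+$, so only the single top passage of $F_\mu(C_R)$ contributes. Verifying the uniform estimate $F_\mu(z)=z\,(1+o(1))$ on $C_R$ and the transversality of that crossing are the technical points to nail down.
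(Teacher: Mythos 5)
Your proposal is correct and follows essentially the same route as the paper's proof: truncate the domain by a large circle, apply the argument principle with condition (B) pinning the image of the $\gamma$-part into $\overline{\comp^-}$ and condition (D) controlling the circular arc, deduce winding number one about each point of $\comp^+$ of moderate modulus, and conclude that the resulting unique-preimage map is the univalent analytic extension of $F_\mu^{-1}$. The only cosmetic difference is that the paper builds its truncated contour from the first-exit points $\gamma(t_1),\gamma(t_2)$ of the disk $\{|z|<R\}$ joined by a circular arc, inverts $F_\mu$ on $\{z\in\comp^+ : |z|<R/2\}$, and then lets $R\to\infty$ with analytic continuation.
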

\begin{proof} 
For $R> |\gamma(0)|$, let $t_1:= \sup\{t<0: |\gamma(t)|\geq R\} \in(-\infty,0)$ and $t_2:= \inf \{t>0: |\gamma(t)| \geq R\}\in (0,\infty)$.  
The circle $\{z\in\comp: |z|=R\}$ is divided into two arcs by $\gamma(t_1),\gamma(t_2)$, and let $A$ be the arc which contains $\{z\in\comp^+: |z|=R\}$. 
Consider a simple closed curve $\gamma_R$ consisting of the arcs $(\gamma(t))_{t\in[t_1,t_2]}$ and $A$. From (\ref{d5}), we can take $R>0$ large enough so that $|F_\mu(z)-z| \leq \frac{1}{2}|z|$ for $z\in D(\gamma),~|z|>R$. 
From the assumption (\ref{d3}), $F_\mu$ maps the simple closed curve $\gamma_R$ to a curve surrounding each point of $\{z\in\comp^+: |z| <\frac{1}{2}R \}$ exactly once, and so the univalent map $F^{-1}_\mu$ can be defined in $\{z\in\comp^+: |z| <\frac{1}{2}R \}$ as the left inverse map of $F_\mu|_{D(\gamma_R)}$ which maps numbers with large positive imaginary parts to numbers with large positive imaginary parts. Here $D(\gamma_R)$ is the bounded Jordan domain surrounded by $\gamma_R$. 
Letting $R\to \infty$, we conclude by analytic continuation that $F_\mu^{-1}$ exists in $\comp^+$ as a univalent map. 
\end{proof}

\begin{figure}[htbp]\label{dia1}
\begin{center}
\includegraphics[width=8.5cm]{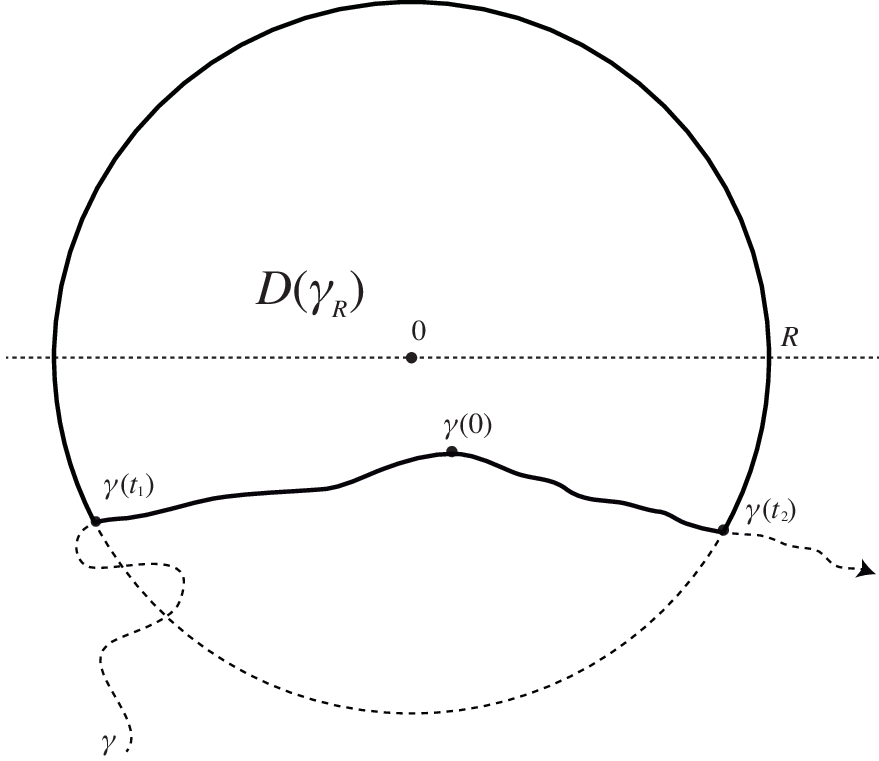}
\end{center}
\end{figure}

\begin{remark}
Note that the map $F_\mu|_{D(\gamma_R)}$ may not be univalent in whole of $D(\gamma_R)$. The fact that each point of $\{z\in\comp^+: |z| <\frac{1}{2}R \}$ has rotation number 1 implies that 
there exists a subset $S_R$ (which is in fact open and simply connected) of  $D(\gamma_R)$ such that $F_\mu$ is univalent in $S_R$ and that $F_\mu(S_R)=\{z\in\comp^+: |z| <\frac{1}{2}R \}$. 

\end{remark}

\section{Proof for Meixner distributions} 
We present some properties of Meixner distributions. 
\begin{enumerate}[\rm(1)]
\item $\rho_t$ is a probability measure for $t>0$ because 
\[
\begin{split}
\int_{\real}|\Gamma(t+ix)|^2\,dx 
&= \int_{\real} \left| \int_{0}^\infty s^{t+ix-1}e^{-s}\,ds \right|^2 \,dx = \int_{\real} \left| \int_{\real} e^{tu -e^{u}}e^{ixu}\,du \right|^2 \,dx\\
&= 2\pi  \int_{\real} e^{2tu -2e^{u}}\,du =2\pi \int_{0}^\infty \left(\frac{s}{2}\right)^{2t} e^{-s}\frac{ds}{s} = \frac{2\pi\Gamma(2t)}{4^t}, 
\end{split}
\]
where Plancherel's theorem was used in the third equality. 

\item $\rho_{1/2}$ coincides with $\mu_1$ thanks to the formula $\Gamma(1-z)\Gamma(z)=\frac{\pi}{\sin(\pi z)}.$ 

\item By the residue theorem,  $G_t:= G_{\rho_t}$ has the series expansion  
$$
G_{t} (z) = \frac{4^t}{\Gamma(2t)}\sum_{n=0}^\infty \frac{(-1)^n \Gamma(n+2t)}{n!}\cdot\frac{1}{z+i(t+n)},  
$$
which is convergent for $0 < t \leq 1/2$. 

\item \label{p3} For any compact set $I \subset \real$, there is $M>0$ such that 
$$
|\Gamma(t+zi)\Gamma(t-zi)| \leq M e^{-\pi |x|}|x|^{2t-1},~~z=x+yi,~|x| \geq 1,~ t,y \in I.  
$$ 
This estimate follows from Stirling's formula. 

\item The density function of $\rho_t$ is symmetric, and moreover strictly decreasing on $[0,\infty)$ as the following calculation shows. We have $\frac{d}{dx} |\Gamma(t+xi)|^2 = -2|\Gamma(t+xi)|^2 \text{Im}\,\psi(t+xi)$ by using the digamma function $\psi(z)=\frac{d}{dz}\log \Gamma(z)$. It is known that $\psi(z)=-\gamma - \sum_{n=0}^\infty\left( \frac{1}{z+n}-\frac{1}{n+1}\right)$, where $\gamma$ is Euler's constant, 
and so $\text{Im}\, \psi(t+xi) =  \sum_{n=0}^\infty\frac{x}{(t+n)^2 +x^2}>0$ for $x>0$. 
\end{enumerate}
We do not use the series expansion of $G_t(z)$; instead the following recursive relation is useful. 
\begin{proposition}\label{prop3}
It holds that 
\begin{equation}\label{eq99}
G_t\left(z-t i\right)=\frac{1}{z}+\frac{it}{z}G_{t+\frac{1}{2}}\left(z+\left(\frac{1}{2}-t\right)i\right),~~\text{\normalfont Im}\,z>t,~~t>0.
\end{equation}
 Iterative use of this relation extends $G_t$ to a meromorphic function in $\comp$ with poles at $-(t+n)i$, $n=0,1,2,\cdots$. 
\end{proposition}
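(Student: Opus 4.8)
The plan is to prove the identity by passing to the Fourier (characteristic-function) representation of the Cauchy transform, where increasing the parameter $t$ by $\frac12$ is just multiplication by $\cosh^{-1}$, and then to obtain the meromorphic continuation by solving the identity for $G_t$. Throughout I would set $w:=z-ti$, so that the hypothesis $\mathrm{Im}\,z>t$ becomes simply $\mathrm{Im}\,w>0$; since $z=w+ti$ and $z+(\tfrac12-t)i=w+\tfrac{i}{2}$, the asserted relation \eqref{eq99} is equivalent to the cleaner form
\[
(w+ti)\,G_t(w) = 1 + it\,G_{t+\frac12}\!\left(w+\tfrac{i}{2}\right),\qquad \mathrm{Im}\,w>0.
\]
For $\mathrm{Im}\,w>0$ I would first record the representation $G_t(w) = -i\int_0^\infty e^{i\xi w}\,\widehat{\rho}_t(\xi)\,d\xi$, which follows from $\tfrac{1}{w-x}=-i\int_0^\infty e^{i\xi(w-x)}\,d\xi$ together with Fubini; this is legitimate because $\widehat{\rho}_t(\xi)=\cosh^{-2t}(\xi/2)$ by \eqref{eq90} decays exponentially, and because $\rho_t$ is symmetric the distinction between $\widehat{\rho}_t(\xi)$ and $\widehat{\rho}_t(-\xi)$ is immaterial. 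Thus $G_t(w) = -i\int_0^\infty e^{i\xi w}\cosh^{-2t}(\xi/2)\,d\xi$.

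Next I would compute $(w+ti)G_t(w)$ from this representation. In the term $wG_t(w)$ I replace $w\,e^{i\xi w}$ by $\tfrac{1}{i}\tfrac{d}{d\xi}e^{i\xi w}$ and integrate by parts: the boundary term at $\xi=\infty$ vanishes, the boundary term at $\xi=0$ yields the constant $1$ (because $\cosh^{-2t}(0)=1$), and the interior term gives $\int_0^\infty e^{i\xi w}\tfrac{d}{d\xi}\cosh^{-2t}(\xi/2)\,d\xi=-t\int_0^\infty e^{i\xi w}\sinh(\xi/2)\cosh^{-2t-1}(\xi/2)\,d\xi$. Adding the remaining piece $ti\,G_t(w)=t\int_0^\infty e^{i\xi w}\cosh^{-2t}(\xi/2)\,d\xi$ and using the cancellation $\cosh(\xi/2)-\sinh(\xi/2)=e^{-\xi/2}$, the two integrals fuse into $t\int_0^\infty e^{i\xi w}e^{-\xi/2}\cosh^{-2t-1}(\xi/2)\,d\xi$. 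Finally I recognize this as $it\,G_{t+\frac12}(w+\tfrac{i}{2})$, since $G_{t+\frac12}(w+\tfrac{i}{2})=-i\int_0^\infty e^{i\xi w}e^{-\xi/2}\cosh^{-(2t+1)}(\xi/2)\,d\xi$; this establishes the displayed identity and hence \eqref{eq99}.

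For the continuation statement I would rewrite the identity as $G_t(w) = \big(1+it\,G_{t+\frac12}(w+\tfrac{i}{2})\big)/(w+ti)$. Since $G_{t+1/2}$ is analytic on $\comp^+$, the right-hand side is analytic on $\{\mathrm{Im}\,w>-\tfrac12\}$ apart from a simple pole at $w=-ti$, which already continues $G_t$ across the real axis. Substituting the analogous identity for $G_{t+1/2}$, then for $G_{t+1}$, and so on, after $k$ steps expresses $G_t(w)$ as an explicit meromorphic function (a finite sum whose only poles lie among $-ti,-(t+1)i,\dots,-(t+k-1)i$) plus a remainder proportional to $G_{t+k/2}(w+\tfrac{k}{2}i)$, which is analytic on $\{\mathrm{Im}\,w>-\tfrac{k}{2}\}$. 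Thus $G_t$ is meromorphic on $\{\mathrm{Im}\,w>-\tfrac{k}{2}\}$ with poles contained in $\{-(t+n)i\}$, and letting $k\to\infty$ gives a meromorphic continuation to all of $\comp$ with poles only at $-(t+n)i$, $n\ge0$. The pole introduced at the $j$-th stage arises from the denominator $(w+\tfrac{j}{2}i)+(t+\tfrac{j}{2})i=w+(t+j)i$, which is precisely how the repeated half-shifts conspire to place every pole on the integer lattice $-(t+j)i$ rather than at half-integer heights.

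I expect the only real obstacle to be technical rather than conceptual: justifying the Fourier representation and the integration by parts (the exponential decay of $\cosh^{-2t}(\xi/2)$ and of its derivative takes care of integrability and of the vanishing boundary term at infinity), and keeping the half-shift bookkeeping straight so that the poles accumulate exactly at $-(t+n)i$. The algebraic core—the elementary cancellation $\cosh-\sinh=e^{-\xi/2}$ that collapses the combination into a single $G_{t+1/2}$—is short once the characteristic-function representation is in hand.
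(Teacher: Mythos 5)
Your proof is correct, but it takes a genuinely different route from the paper's. The paper manipulates the density directly: it shifts the contour of $\int_{\real}\frac{|\Gamma(t+ix)|^2}{z-x}\,dx$ by $\frac{i}{2}$, uses the functional equation $z\Gamma(z)=\Gamma(z+1)$ together with a partial-fraction decomposition to make $G_{t+\frac12}$ appear, and, because the contour shift requires the integrand to be pole-free in the strip $-\frac12\le\text{Im}\,z\le 0$, it must first assume $t>\frac12$ and then recover all $t>0$ by real-analyticity of $G_t$ in $t$. You instead work from the Laplace--Fourier representation $G_t(w)=-i\int_0^\infty e^{i\xi w}\cosh^{-2t}(\xi/2)\,d\xi$, built from the characteristic function \eqref{eq90} (which the paper states with a citation but never uses in its own proof), and obtain the identity from a single integration by parts plus the elementary cancellation $\cosh(\xi/2)-\sinh(\xi/2)=e^{-\xi/2}$. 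This works uniformly in $t>0$, so you avoid both the restriction $t>\frac12$ and the continuation-in-$t$ step; the price is importing \eqref{eq90} as an external fact, whereas the paper's computation is self-contained at the level of the density. Your iteration for the meromorphic extension, with the $j$-th substitution contributing the denominator $w+(t+j)i$, is essentially identical to the paper's. One remark applying equally to both arguments: neither verifies that the singularities at $-(t+n)i$ are non-removable. For the pole at $-ti$ (the one actually invoked later when checking condition (\ref{d3}) for $\rho_t$) this follows immediately from your formula: by symmetry of $\rho_{t+\frac12}$, the value $G_{t+\frac12}\bigl(\bigl(\tfrac12-t\bigr)i\bigr)$ is purely imaginary with negative imaginary part when $0<t<\frac12$, so the numerator $1+it\,G_{t+\frac12}\bigl(\bigl(\tfrac12-t\bigr)i\bigr)$ is real and greater than $1$; since the paper is no more explicit here, this is not a gap relative to its standard of rigor.
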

\begin{proof}
Assume $t > \frac{1}{2}$. Because $\Gamma(t+iz)\Gamma(t-iz)$ does not have a pole in $\{z \in \mathbb{C}:  -\frac{1}{2} \leq \text{\normalfont Im}\,z \leq 0\}$ and vanishes rapidly as $\text{Re}\,z \to \infty$ (see the above property (\ref{p3})), 
\[
\begin{split}
G_{t}\left(z-\frac{i}{2}\right)
&=\frac{4^t}{2\pi \Gamma(2t)}\int_{\mathbb{R}}\frac{1}{z-(x+\frac{i}{2})}\Gamma(t+ix)\Gamma(t-ix)\,dx \\
&=  \frac{4^t}{2\pi \Gamma(2t)}\int_{\mathbb{R}}\frac{1}{z-x}\Gamma\left(t+\frac{1}{2} +ix\right)\Gamma\left(t-\frac{1}{2}-ix\right)\,dx,~~\text{Im}\,z >\frac{1}{2}.
\end{split}
\]
By using the basic relation $z\Gamma(z)=\Gamma(z+1)$, we obtain
\[
\begin{split}
G_{t}\left(z-\frac{i}{2}\right)
&=  \frac{4^t}{2\pi \Gamma(2t)}\int_{\mathbb{R}}\frac{\Gamma\left(t+\frac{1}{2} +ix\right)\Gamma\left(t+\frac{1}{2}-ix\right)}{(z-x)(t-\frac{1}{2}-ix)}\,dx \\
&=  \frac{4^t}{2\pi \Gamma(2t)}\int_{\mathbb{R}}\frac{1}{z+(t-\frac{1}{2})i}\left(\frac{1}{t-\frac{1}{2}-ix}-\frac{1}{iz-ix}\right)\left|\Gamma\left(t+\frac{1}{2} +ix\right)\right|^2 \,dx \\
&=  \frac{t i}{z+(t-\frac{1}{2})i}\cdot\frac{4^{t+\frac{1}{2}}}{2\pi \Gamma(2t+1)}\int_{\mathbb{R}}\frac{1}{z-x} \left|\Gamma\left(t+\frac{1}{2} +ix\right)\right|^2 \,dx \\
&~~~~~~~~~+   \frac{1}{(z+(t-\frac{1}{2})i)}\cdot\frac{4^t}{2\pi \Gamma(2t)}\int_{\mathbb{R}}\frac{|\Gamma\left(t+\frac{1}{2} +ix\right)|^2}{t-\frac{1}{2}-ix} \,dx. \\
\end{split}
\]
In the last integral, we can again apply the formula $z\Gamma(z)=\Gamma(z+1)$, and moreover we deform the contour $\mathbb{R}$ to $\mathbb{R}+\frac{i}{2}$: 
\[
\begin{split}
  \frac{4^t}{2\pi \Gamma(2t)}\int_{\mathbb{R}} \frac{|\Gamma\left(t+\frac{1}{2} +ix\right)|^2}{t-\frac{1}{2}-ix} \,dx 
&=   \frac{4^t}{2\pi \Gamma(2t)}\int_{\mathbb{R}}\Gamma\left(t+\frac{1}{2} +ix\right)\Gamma\left(t-\frac{1}{2}-ix\right) \,dx\\
&= \frac{4^t}{2\pi \Gamma(2t)}\int_{\mathbb{R}}\Gamma\left(t+ix\right)\Gamma\left(t-ix\right) \,dx\\
 &= 1. 
\end{split}
\]
The above calculations amount to
$
G_t\left(z-\frac{i}{2}\right)=\frac{1}{z+(t-\frac{1}{2})i}+\frac{it}{z+(t-\frac{1}{2})i}G_{t+\frac{1}{2}}(z),   
$
which holds for any $t>0$ since $G_t(z)$ depends on $t>0$ real analytically. The desired relation (\ref{eq99}) follows from the simple replacement of $z$ by $z+(\frac{1}{2}-t)i$. 
The right hand side of (\ref{eq99}) is meromorphic in $\{z\in\comp: \text{Im}\,z>t-\frac{1}{2}\}$ with pole at $0$, so that $G_t$ extends to a meromorphic function  in $\{z\in\comp: \text{Im}\,z>t-\frac{1}{2}\}$. Next we can write $G_{t+\frac{1}{2}}$ in terms of $G_{t+1}$, and so iteratively $G_t$ can be written in terms of $G_{t+\frac{n}{2}}$ for any $n \in \mathbb{N}$. This procedure extends $G_t$ to a meromorphic function in $\comp$ with poles at $-(t+n)i$, $n=0,1,2,\cdots$. 
\end{proof}

 \begin{lemma}\label{lem3}
If a probability measure $\mu$ has a density $p(x)$ such that $p(x) = p(-x)$, $p'(x) \leq 0$ for a.e.\ $x > 0$ and $\lim_{x\to \infty}  p(x)\log x =0$, then it holds that $\text{\normalfont Re}\, G_\mu(x+yi)>0$ for $x,y>0$. 
 \end{lemma}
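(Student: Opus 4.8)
The plan is to compute $\operatorname{Re} G_\mu$ as an honest real integral and reduce the claimed positivity to the two sign hypotheses on $p$. Writing $z=x+yi$ with $x,y>0$ and separating real and imaginary parts in $\frac{1}{z-s}=\frac{(x-s)-yi}{(x-s)^2+y^2}$, we obtain
\[
\operatorname{Re} G_\mu(x+yi)=\int_\real \frac{x-s}{(x-s)^2+y^2}\,p(s)\,ds.
\]
The key observation is that the $s$-dependent kernel is an exact derivative, namely $\frac{x-s}{(x-s)^2+y^2}=\frac{d}{ds}\bigl(-\tfrac12\log((x-s)^2+y^2)\bigr)$. First I would integrate by parts in $s$ over all of $\real$, transferring the derivative onto $p$.

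The boundary term is $\bigl[-\tfrac12\log((x-s)^2+y^2)\,p(s)\bigr]_{-\infty}^{\infty}$, and since $\log((x-s)^2+y^2)\sim 2\log|s|$ as $s\to\pm\infty$, this term vanishes precisely because of the hypothesis $\lim_{x\to\infty}p(x)\log x=0$ (together with the evenness of $p$). This is the one place where the logarithmic decay assumption is genuinely used, and making this step rigorous --- justifying the integration by parts when $p$ is only assumed locally absolutely continuous with $p'\le 0$ a.e., and controlling the tails --- is the main technical obstacle. After discarding the boundary term one is left with
\[
\operatorname{Re} G_\mu(x+yi)=\frac12\int_\real \log\bigl((x-s)^2+y^2\bigr)\,p'(s)\,ds.
\]

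Next I would exploit the symmetry. Differentiating $p(s)=p(-s)$ shows that $p'$ is odd, so folding the integral onto $(0,\infty)$ via $s\mapsto -s$ on the negative half-line gives
\[
\operatorname{Re} G_\mu(x+yi)=\frac12\int_0^\infty \log\frac{(x-s)^2+y^2}{(x+s)^2+y^2}\,p'(s)\,ds.
\]
Now the sign is transparent: for $x,s>0$ one has $(x+s)^2>(x-s)^2$, so the logarithm is $\le 0$, while $p'(s)\le 0$ by hypothesis; the product is therefore $\ge 0$, and the whole integral is nonnegative. Strict positivity follows because $p$ is a probability density decreasing to $0$ on $(0,\infty)$, so $p'$ cannot vanish a.e.\ there, and on the positive-measure set where $p'<0$ the logarithmic kernel is strictly negative, forcing the integral to be strictly positive.

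Finally, I would note that one may instead integrate by parts using the \emph{combined} antiderivative $\tfrac12\log\frac{(x+s)^2+y^2}{(x-s)^2+y^2}$ on $(0,\infty)$, which stays bounded (it is $O(1/s)$ at infinity); then only boundedness of $p$ is needed for the boundary term, while the resulting integral is identical. This alternative makes clear that the hypothesis $\lim_{x\to\infty}p(x)\log x=0$ is an artifact of splitting the kernel into its two logarithmic pieces rather than an essential requirement, which is a useful sanity check on the computation.
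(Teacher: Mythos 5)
Your proof is correct and follows essentially the same route as the paper's: writing $\operatorname{Re}G_\mu$ as an integral against $p$, recognizing the kernel as $-\tfrac12\frac{\partial}{\partial u}\log\bigl((x-u)^2+y^2\bigr)$, integrating by parts (with the boundary term killed by the $p(x)\log x\to 0$ hypothesis), and folding onto $(0,\infty)$ via the oddness of $p'$ to get a manifestly nonnegative integrand. Your closing remark --- that integrating by parts against the combined antiderivative $\tfrac12\log\frac{(x+s)^2+y^2}{(x-s)^2+y^2}$, which is $O(1/s)$ at infinity, renders the logarithmic-decay hypothesis superfluous --- is a valid refinement the paper does not make, but it does not change the substance of the argument.
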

 \begin{proof}
 The claim follows from the computation
 \[
 \begin{split}
  \text{\normalfont Re}\, G_\mu(x+yi)
  &=  \int_{\mathbb{R}}\frac{x-u}{(x-u)^2+y^2}p(u)\,du
    = -\frac{1}{2}\int_{\mathbb{R}}\left(\frac{\partial }{\partial u}\log\left((x-u)^2+y^2\right) \right) p(u)\,du \\
  &= \frac{1}{2}\int_{\mathbb{R}}\log\left((x-u)^2+y^2\right)p'(u)\,du \\
  &= \frac{1}{2}\int_{0}^\infty\log\left(\frac{(x+u)^2+y^2}{(x-u)^2+y^2}\right)(-p'(u))\,du  >0,~~x,y>0.
  \end{split}
  \]
  The property $p'(-u)=-p'(u)$ was used at the final equality.
 \end{proof}
 
 \begin{theorem}
 The Meixner distribution $\rho_t$ is in $\mathcal{UI}$ for $0 < t \leq \frac{1}{2}$.   
 \end{theorem}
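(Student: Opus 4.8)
The plan is to apply the sufficient condition (the Proposition with hypotheses (A)--(D)) using the explicit horizontal curve
\[
\gamma(x) := x - ti, \qquad x \in \real,
\]
so that $D(\gamma) = \{z \in \comp : \text{Im}\, z > -t\}$. Hypothesis (A) is immediate since $|\gamma(x)| \to \infty$ as $x \to \pm\infty$, and $\gamma \subset \comp^-$ because $t>0$. The whole argument will run through a single application of the recursion \eqref{eq99}: by Proposition \ref{prop3} both of its sides are meromorphic on $\comp$, so by the identity theorem the substitution $z = w + ti$ turns it into
\[
G_t(w) = \frac{1}{w + ti}\Bigl(1 + it\, G_{t + \frac12}\bigl(w + \tfrac{i}{2}\bigr)\Bigr),
\]
valid for all $w$. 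The decisive point is that for $0 < t \le \tfrac12$ and $\text{Im}\, w \ge -t$ the shifted argument satisfies $\text{Im}\bigl(w + \tfrac{i}{2}\bigr) = \text{Im}\, w + \tfrac12 \ge \tfrac12 - t \ge 0$, so $G_{t+\frac12}$ is only ever evaluated in $\overline{\comp^+}$, where it is analytic and well understood. The hypothesis $t \le \tfrac12$ is used precisely here.

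For the analyticity hypothesis (C) I must show that $F_t = 1/G_t$ is analytic in $D(\gamma)$ and continuous on $\overline{D(\gamma)}$, i.e.\ that $G_t$ has neither poles nor zeros in $\{\text{Im}\, z > -t\}$. By Proposition \ref{prop3} the poles of $G_t$ sit at $-(t+n)i$, all with imaginary part $\le -t$ and only $-ti$ lying on $\gamma$ itself, where $F_t$ vanishes continuously. For the absence of zeros I use the displayed identity: in $\comp^+$ there are none since $G_t(\comp^+) \subset \comp^-$, while in the strip $-t < \text{Im}\, w \le 0$ a zero would force $G_{t+\frac12}\bigl(w + \tfrac{i}{2}\bigr) = i/t \in \comp^+$, which is impossible because $w + \tfrac{i}{2} \in \comp^+$ and hence $G_{t+\frac12}\bigl(w + \tfrac{i}{2}\bigr) \in \comp^-$.

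The image hypothesis (B) is the heart of the matter. On $\gamma$ one has $w = x - ti$, whence $z = w+ti = x$ is real and the identity gives
\[
\text{Im}\, G_t(x - ti) = \frac{t}{x}\,\text{Re}\, G_{t+\frac12}\bigl(x + (\tfrac12 - t)i\bigr).
\]
For $x > 0$ the argument lies in the closed first quadrant, and Lemma \ref{lem3}, applied to $\rho_{t+\frac12}$ (whose density is symmetric, strictly decreasing on $(0,\infty)$, and rapidly decaying by Stirling's formula) together with its boundary version obtained by letting the imaginary part tend to $0$, yields $\text{Re}\, G_{t+\frac12} > 0$; hence $\text{Im}\, G_t(x-ti) > 0$ and therefore $\text{Im}\, F_t(x-ti) < 0$. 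For $x < 0$ the reflection symmetry $G_s(-\bar z) = -\overline{G_s(z)}$ of symmetric measures makes both factors change sign, so the product stays positive and the conclusion persists; at $x = 0$ one simply has $F_t(-ti) = 0 \in \overline{\comp^-}$. Thus $F_t(\gamma) \subset \overline{\comp^-}$.

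Finally, for the asymptotic hypothesis (D) I would expand the displayed identity as $w \to \infty$ in $\overline{D(\gamma)}$: since $w + \tfrac{i}{2} \to \infty$ in $\overline{\comp^+}$ and $G_{t+\frac12}(\zeta) = \zeta^{-1} + o(|\zeta|^{-1})$ there, one gets $G_t(w) = w^{-1} + o(|w|^{-1})$ and hence $F_t(w) = w + o(w)$ uniformly. I expect the only genuinely delicate step to be the uniformity of this expansion in the borderline case $t = \tfrac12$, where $w + \tfrac{i}{2}$ approaches the real axis along $\gamma$ while tending to infinity; there I would invoke the Stirling estimate (the decay $|\Gamma(t+iz)\Gamma(t-iz)| \le M e^{-\pi|x|}|x|^{2t-1}$ recorded above) to control $G_1$ uniformly up to the real line. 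With all four hypotheses of the sufficient condition verified, it follows that $\rho_t \in \mathcal{UI}$ for $0 < t \le \tfrac12$.
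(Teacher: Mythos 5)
For $0<t<\tfrac12$ your argument is essentially identical to the paper's: the same curve $\gamma_t=\{x-ti:x\in\real\}$, the same use of the recursion from Proposition \ref{prop3}, Lemma \ref{lem3} applied to $\rho_{t+\frac12}$ for condition (\ref{d3}), and the contradiction $G_{t+\frac12}=\tfrac{i}{t}\in\comp^+$ for condition (\ref{d4}). Your treatment of (\ref{d5}) via the recursion and half-plane asymptotics of $G_{t+\frac12}$ differs from the paper's contour-deformation argument, but it is sound in this range because the shifted argument stays in $\{\zeta:\text{Im}\,\zeta\geq \tfrac12-t\}$ with $\tfrac12-t>0$, where $\zeta G_{t+\frac12}(\zeta)\to 1$ uniformly since $\rho_{t+\frac12}$ has finite first moment.

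The genuine gap is at $t=\tfrac12$, which you attack directly rather than deducing from the open case. There the shifted point $w+\tfrac{i}{2}$ lies \emph{on the real axis} whenever $w\in\gamma_{1/2}$, so every key input must hold for boundary values of $G_1$ on $\real$, and none of your three boundary claims is justified as stated: (i) ``letting the imaginary part tend to $0$'' in Lemma \ref{lem3} gives only $\text{Re}\,G_1\geq 0$ by continuity, not the strict inequality; the non-strict version suffices for (\ref{d3}) but not for excluding zeros of $G_{1/2}$ on $\gamma_{1/2}$, which (\ref{d4}) needs (continuity of $F_{1/2}$ on $\overline{D(\gamma_{1/2})}$ fails at a boundary zero). (ii) Your zero-free argument covers $\comp^+$ and the open strip $-t<\text{Im}\,w\leq 0$ but not the boundary line itself: there $w+\tfrac{i}{2}$ is real, and the mapping property $G_1(\comp^+)\subset\comp^-$ says nothing about real boundary points, so it does not rule out $G_1=2i$ there. (iii) The uniform expansion $G_1(\zeta)=\zeta^{-1}+o(|\zeta|^{-1})$ up to the real axis is asserted, not proven; it is false for general probability measures and here would need the exponential decay of the density. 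Each item is repairable (e.g.\ by Stieltjes inversion the boundary value satisfies $\text{Im}\,G_1(x)=-\pi p_1(x)<0$, hence $G_1\neq 2i$ on $\real$), but the paper's one-line device renders all of it unnecessary: $\rho_t\to\rho_{1/2}$ weakly as $t\uparrow\tfrac12$, and by Lemma \ref{lem1}(2) the class $\mathcal{UI}$ is closed under weak convergence, so it suffices to treat $0<t<\tfrac12$, where every shifted argument lies strictly inside $\comp^+$. You should either add this reduction or carry out the three boundary arguments in full; as written, the case $t=\tfrac12$ is not proved.
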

 \begin{proof}
 We may assume that $0<t<\frac{1}{2}$ since the set $\mathcal{UI}$ is closed with respect to the weak convergence. 
We will check conditions (\ref{d1})--(\ref{d5}) for $F_t(z):=\frac{1}{G_t(z)}$ and $\gamma_t:=\{x-t i: x \in \mathbb{R}\}$. (\ref{d1}) is trivial. 
To prove (\ref{d3}), we use Proposition \ref{prop3}: 
$$
\text{Im}\, G_t\left(x-t i\right)= \frac{t}{x}\text{Re}\, G_{t+\frac{1}{2}}\left(x+\left(\frac{1}{2}-t\right)i\right). 
$$
Since $\frac{d}{dx} |\Gamma(t+\frac{1}{2}+xi)|^2 <0$ for $x>0$, we can apply Lemma \ref{lem3} to the measure $\rho_{t+\frac{1}{2}}$, to assert that $\text{Re}\, G_{t+\frac{1}{2}}\left(x+\left(\frac{1}{2}-t\right)i\right)> 0$ for $x>0$. Hence $\text{Im}\, G_t\left(x-t i\right)>0$ for $x>0$ and also for $x < 0$ by symmetry.   
Hence condition (\ref{d3}) holds since $-ti$ is a pole of $G_t$. 

From Proposition \ref{prop3}, $G_t$ is a meromorphic function and so is $F_t$. If $G_t$ had a zero in $\overline{D(\gamma_{t})}$, there would be a point $z_0 \in \comp^+ \cup \mathbb{R} \setminus\{0\}$ such that $G_{t}(z_0-ti)=0$. This implies $1+ti G_{t+\frac{1}{2}}(z_0+(\frac{1}{2}-t)i)=0$ and so 
$G_{t+\frac{1}{2}}(z_0+(\frac{1}{2}-t)i) = \frac{i}{t} \in \comp^+$. This is a contradiction because $G_{t+\frac{1}{2}}$ maps $\comp^+$ into $\comp^-$. 
Thus  condition (\ref{d4}) is proved.

Condition (\ref{d5}) can be checked as follows. Let $p_t(x)$ be the density function of $\rho_t$. In the integral $\int_{\real}\frac{1}{z-x}\,\rho_t(dx)$, one is allowed to replace the contour $\real$ by $C_t:= \{x-  \frac{3t}{2}i  :  -\infty <x < -\frac{3t}{2}\} \cup \{-\frac{3t}{2} i +\frac{3t}{2} e^{i\theta} :  0 \leq \theta \leq \pi \} \cup \{x -\frac{3t}{2} i  :  \frac{3t}{2} < x < \infty\}$: 
$$
\int_{\real}\frac{1}{z-x}\,\rho_t(dx) = \int_{C_t}\frac{1}{z-w} p_t(w)\,dw. 
$$
Clearly 
$
1 = \int_{\real}\,p_t(x)\,dx = \int_{C_t}p_t(w)\,dw, 
$
so we have $1 -zG_{t}(z) = \int_{C_t} \frac{1}{w-z}\,wp_t(w)\,dw. $ If $z$ tends to $\infty$ satisfying $z \in D(\gamma_t)$, then $1 - z G_{t}(z)$ tends to $0$ by Lebesgue convergence theorem. 
This implies $\left|\frac{F_{t}(z)-z}{z}\right| \to 0$, the conclusion. 
\end{proof}

\begin{remark}
The proof uses the inequality that $\text{Re}\,G_{t+\frac{1}{2}}(x+yi) >0$ for $x,y>0$. If this inequality holds even for negative $y$, then we can prove the free infinite divisibility of $\rho_t$ for $t>\frac{1}{2}$ too. 
\end{remark}

\begin{remark}\label{rem2}
 The free cumulant sequence $(r_n(\mu))_{n =1}^\infty$ of a probability measure $\mu$ with finite moments of all orders can be defined as the coefficients of series expansion of $F_\mu ^{-1}(z)-z$: 
$$
F_\mu^{-1}(z) -z = \sum_{n=1}^\infty \frac{r_n(\mu)}{z^{n-1}}, 
$$
see \cite[Remark 16.18]{NS06}.  
 The free infinite divisibility of $\rho_t$ $(0 < t \leq \frac{1}{2})$ implies that the corresponding free cumulant sequence is conditionally nonnegative definite, i.e.\ the $N\times N$ matrix $(r_{m+n}(\rho_t))_{m,n =1}^N$ is nonnegative definite for any $N\geq 1$; see Theorem 13.16 of \cite{NS06}.\footnote{If a measure $\mu$ has a compact support, the free infinite divisibility is equivalent to the conditional nonnegative definiteness of free cumulants. This equivalence can be extended to a measure with finite moments of all orders when the moment problem is determinate.} If $t=\frac{1}{2}$, the free cumulants up to the $10$th order are given by 
 $$
 (r_2(\mu_2), r_4(\mu_1), r_6(\mu_1), \cdots)= (1,3,38,947,37394,\cdots), ~~r_{2n+1}(\mu_1)=0, ~n\geq 0. 
 $$
 This sequence can be found in \cite{OEIS}. 
\end{remark}

\section{Proof for the logistic distribution}
The free infinite divisibility of the logistic distribution $\mu_2$ is proved with direct computation of the Cauchy transform. 
From residue theorem, it turns out that 
\begin{equation}\label{cosh2}
\begin{split}
G_{\mu_2}(z)
&=\sum_{n=1}^\infty \frac{i}{(z+(n-\frac{1}{2})i)^2} \\
 &=  \sum_{n=1}^\infty \frac{2 x(y+n-\frac{1}{2})}{[x^2+(y+n-\frac{1}{2})^2]^2} +i\sum_{n=1}^\infty \frac{x^2-(y+n-\frac{1}{2})^2}{[x^2+(y+n-\frac{1}{2})^2]^2}, ~~~z=x+yi \in \comp^+. 
\end{split}
\end{equation}
Now we take $\gamma_{1/2}:=\{x-\frac{i}{2}  :  x \in \mathbb{R}\}$. 
The imaginary part of $G_{\mu_2}$ on $\gamma_{1/2}$ can be written as
\[
g(x):= \text{Im}\, G_{\mu_2}\left(x-\frac{i}{2}\right)= \sum_{n=0}^\infty\frac{x^2-n^2}{(x^2+n^2)^2}. 
\]
Fortunately, $g$ can be written by elementary functions.  
\begin{lemma}\label{lem5} The function $g$ is given by 
$
\displaystyle g(x)= \frac{1}{2}\left(\frac{1}{x^2}+\left(\frac{\pi}{\sinh(\pi x)}\right)^2\right). 
$
\end{lemma}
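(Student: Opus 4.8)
The plan is to reduce the identity to the classical partial-fraction expansion of the hyperbolic cotangent by recognizing each summand as a derivative. First I would peel off the $n=0$ term of $g$, which equals $\frac{1}{x^2}$, so that it remains to evaluate the tail $\sum_{n=1}^\infty \frac{x^2-n^2}{(x^2+n^2)^2}$. The key elementary observation is that
$$
\frac{d}{dx}\left(\frac{x}{x^2+n^2}\right) = \frac{n^2-x^2}{(x^2+n^2)^2},
$$
so each tail summand equals $-\frac{d}{dx}\frac{x}{x^2+n^2}$, and hence (granting the interchange of summation and differentiation discussed below)
$$
g(x) = \frac{1}{x^2} - \frac{d}{dx}\sum_{n=1}^\infty \frac{x}{x^2+n^2}.
$$

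Next I would invoke the well-known expansion $\pi\coth(\pi x) = \frac{1}{x} + \sum_{n=1}^\infty \frac{2x}{x^2+n^2}$, which gives $\sum_{n=1}^\infty \frac{x}{x^2+n^2} = \frac{1}{2}\bigl(\pi\coth(\pi x) - \frac{1}{x}\bigr)$. Differentiating this, using $\frac{d}{dx}\coth(\pi x) = -\pi/\sinh^2(\pi x)$, yields
$$
\frac{d}{dx}\sum_{n=1}^\infty \frac{x}{x^2+n^2} = \frac{1}{2}\left(\frac{1}{x^2} - \frac{\pi^2}{\sinh^2(\pi x)}\right).
$$
Substituting this into the expression for $g$ and simplifying produces exactly $\frac{1}{2}\bigl(\frac{1}{x^2} + \bigl(\frac{\pi}{\sinh(\pi x)}\bigr)^2\bigr)$, which is the claim.

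The only point requiring care is the term-by-term differentiation, and this is the main (though mild) obstacle. I would justify it by noting that both the series $\sum_{n\geq 1}\frac{x}{x^2+n^2}$ and its formal termwise derivative $\sum_{n\geq 1}\frac{n^2-x^2}{(x^2+n^2)^2}$ converge locally uniformly on $(0,\infty)$: on any compact subset away from the origin the general terms are $O(1/n^2)$, so the Weierstrass $M$-test applies to each and the classical theorem on differentiating a uniformly convergent series of derivatives licenses the interchange. Everything else is routine algebraic bookkeeping.
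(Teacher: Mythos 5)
Your proof is correct, but it takes a genuinely different route from the paper's. Both arguments ultimately rest on the expansion $\frac{\pi^2}{\sinh^2(\pi x)}=\sum_{n\in\mathbb{Z}}\frac{1}{(x-ni)^2}$, whose pairing of the $\pm n$ terms gives $\frac{1}{x^2}+2\sum_{n\geq 1}\frac{x^2-n^2}{(x^2+n^2)^2}$; the difference lies in how that expansion is obtained. The paper proves it from scratch by a complex-analytic Liouville argument: it observes that $\frac{\pi^2}{\sinh^2(\pi x)}-\sum_{n\in\mathbb{Z}}\frac{1}{(x-ni)^2}$ has no singularities (the local expansion of $\sinh^{-1}$ near each pole cancels the double pole), asserts it is bounded on $\mathbb{C}$, and concludes it is the constant $0$ from the behavior at infinity. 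You instead quote the classical partial-fraction expansion $\pi\coth(\pi x)=\frac{1}{x}+\sum_{n\geq 1}\frac{2x}{x^2+n^2}$, recognize each tail summand of $g$ as $-\frac{d}{dx}\frac{x}{x^2+n^2}$, and differentiate the quoted series term by term; your justification of the interchange (local uniform convergence of the differentiated series via the $M$-test, plus the standard theorem on differentiating series) is complete, and the algebra checks out. Your route is more elementary --- purely real-variable calculus once the cotangent expansion is granted --- whereas the paper's route is self-contained in that it does not presuppose any classical series, at the cost of leaving to the reader the boundedness verification (which needs periodicity under $x\mapsto x+i$ together with decay as $|\mathrm{Re}\,x|\to\infty$). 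Either argument is a valid proof of the lemma.
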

\begin{proof}
It is known that $\frac{1}{\sinh (\pi x)} = \frac{1}{\pi x}-\frac{\pi}{6}x+O(x^3)$ as $x \to 0$, and so $\left(\frac{\pi}{\sinh(\pi x)}\right)^2 = \frac{1}{x^2}+O(1)$, $x \to 0$. 
The poles of $\left(\frac{\pi}{\sinh(\pi x)}\right)^2$ are at $x= n i$ ($n \in \mathbb{Z}$) and the function
$\left(\frac{\pi}{\sinh(\pi x)}\right)^2 - \sum_{n =-\infty}^\infty \frac{1}{(x-n i)^2}$ does not have a singular point. This function is bounded by a constant on $\comp$ and so 
equal to a constant, which is actually zero as is known from the limit $x \to \infty$. Hence 
\[
\begin{split}
\left(\frac{\pi}{\sinh(\pi x)}\right)^2
&= \sum_{n =-\infty}^\infty \frac{1}{(x-n i)^2}
= \frac{1}{x^2} +  \sum_{n=1}^\infty\left(\frac{1}{(x-n i)^2}+ \frac{1}{(x+n i)^2}\right) \\
&= \frac{1}{x^2} +2 \sum_{n=1}^\infty \frac{x^2 -n^2}{(x^2+n^2)^2},  
\end{split}
\]
leading to the conclusion. 
\end{proof}
We easily find that $g(x)>0$ for $x \neq 0$ thanks to Lemma \ref{lem5}, and the function $F_{\mu_2}$ vanishes at $-\frac{i}{2}$ since it is a pole of $G_{\mu_2}$. 
Hence condition (\ref{d3}) is satisfied. 

The following properties can be proved from (\ref{cosh2}): 
\begin{enumerate}[\rm(i)] 
\item $\text{Re}\, G_{\mu_2}(x+yi) > 0$ for $x>0$ and $y \geq -\frac{1}{2}$;
\item $\text{Im}\, G_{\mu_2}(yi) < 0$ for $y > -\frac{1}{2}$. 
\end{enumerate}
So $G_{\mu_2}$ does not have a zero in $\overline{D(\gamma_{1/2})}$ and so $F_{\mu_2}$ is analytic in $D(\gamma_{1/2})$, continuous on $\overline{D(\gamma_{1/2})}$. Consequently 
$\gamma_{1/2} =\{x-\frac{i}{2}  :  x \in \mathbb{R}\}$ satisfies condition (\ref{d4}). 

Condition (\ref{d5}) is proved similarly to the case of $\rho_t$. 

\textbf{Open problems.} 
The authors have not been able to solve the following questions. 
\begin{enumerate}[\rm(a)]
\item Free infinite divisibility for Meixner distributions $\rho_t$  in the case $t > \frac{1}{2}$ and for non symmetric Meixner distributions.  
\item Free infinite divisibility for the measure with density  $\frac{2\pi}{2^r B(\frac{r}{2}, \frac{r}{2})}(\frac{1}{\cosh \pi x})^{r}$ for $r >0$, $r \neq 1,2$. 
\item Characterization of the class $\mathcal{UI}$ in terms of free L\'evy measures. 
\item Combinatorial meaning of the free cumulant sequence of $\rho_t$, in particular of $\rho_{1/2}.$  
\end{enumerate}

\section*{Acknowledgements}
The authors could improve the previous version of the manuscript thanks to referee's kind, helpful suggestions and comments.   
The work was partially supported by the MAESTRO grant DEC-2011/02/A/  ST1/00119 and OPUS grant DEC-2012/05/B/ST1/00626 of National Centre of Science (M.\ Bozejko). 
This work was supported by Global COE Program at Kyoto University and also by Marie Curie International Incoming Fellowship in Universit\'e de Franche-Comt\'e (T.\ Hasebe).


\begin{thebibliography}{99}
\bibitem[AS70]{AS70} M.\ Abramowitz and I.A.\ Stegun, \emph{Handbook of Mathematical Functions with Formulas, Graphs, and Mathematical Tables}, National
Bureau of Standards, Washington, 1970. 
\bibitem[ABBL10]{ABBL10} M.\ Anshelevich, S.T.\ Belinschi, M.\ Bo\.zejko and F.\ Lehner, Free infinite divisibility for Q-Gaussians, Math.\ Res.\ Lett.\ \textbf{17} (2010), 905--916.
\bibitem[AB]{AB} O.\ Arizmendi and S.T.\ Belinschi, Free infinite divisibility for ultrasphericals, Infin.\ Dimens.\ Anal.\ Quantum Probab.\ Relat.\ Top.\ \textbf{16} (2013), 1350001 (11 pages). 
\bibitem[AHa]{AHa} O.\ Arizmendi and T.\ Hasebe. On a class of explicit Cauchy-Stieltjes transforms related to monotone stable and free Poisson laws, Bernoulli, to appear.  
\bibitem[AHb]{AHb} O.\ Arizmendi and T.\ Hasebe, Classical and free infinite divisibility for Boolean stable laws, Proc.\ Amer.\ Math.\ Soc., to appear. arXiv:1205.1575 
\bibitem[AHS]{AHS} O.\ Arizmendi, T.\ Hasebe and N.\ Sakuma, On the law of free subordinators, ALEA, Lat.\ Amer.\ J.\ Probab.\ Math.\ Stat. \textbf{10}, No.\ 2 (2013), 271--291. 
\bibitem[BBLS11]{BBLS11} S.T.~Belinschi, M.\ Bo\.{z}ejko, F.\ Lehner and  R.\ Speicher, The normal distribution is $\boxplus$-infinitely divisible, Adv.\  Math.\ \textbf{226}, No.\ 4 (2011), 3677--3698. 
\bibitem[BV93]{BV93} H.~Bercovici and D.~Voiculescu, Free convolution of measures with unbounded support, Indiana Univ.\ Math.\ J.\ \textbf{42}, No.\ 3 (1993), 733--773.
\bibitem[B92]{B92} L.\ Bondesson, Generalized gamma convolutions and related classes of distributions and densities, Lecture Notes in Stat.\ \textbf{76}, Springer, New York, 1992. 
\bibitem[H]{H} T.\ Hasebe, Free infinite divisibility of measures with rational function densities, preprint. 
\bibitem[KLS10]{KLS10} R.\ Koekoek, P.A.\ Lesky and R.F.\ Swarttouw, \emph{Hypergeometric orthogonal polynomials and their q-analogues}, Springer-Verlag, Berlin, 2010. 
\bibitem[LM95]{LM95} H.\ van Leeuwen and H.\ Maassen,  A q-deformation of the Gauss distribution, J.\ Math.\ Phys.\ \textbf{36} (1995), No.\ 9, 4743--4756.
\bibitem[L51]{L54} P.\ L\'evy, Wiener's random functions, and other Laplacian random functions, Proc.\ 2nd Berkeley Symp.\ on Math.\ Statist.\ and Prob.\ (Univ.\ of California\ Press, 1951), 171--187.
\bibitem[NS06]{NS06} A.\ Nica and R.\ Speicher, {\it Lectures on the Combinatorics of Free Probability}, London Math.\ Soc., Lecture Notes Series \textbf{335}, Cambridge University Press, 2006.
\bibitem[OEIS]{OEIS} OEIS Foundation Inc.\ (2011), The On-Line Encyclopedia of Integer Sequences, http://oeis.org/A158119. 
\bibitem[ST98]{ST98} W.\ Schoutens and J.L.\ Teugels, L\'evy processes, polynomials and Martingales, Commun.\ Statist.-Stoch.\ Mod.\ \textbf{14} (1998) 335--349.
\bibitem[V86]{V86} D.\ Voiculescu, Addition of certain non-commutative random variables, J.\ Funct.\ Anal.\ \textbf{66} (1986), 323--346.
\end{thebibliography}
\end{document}